\newtheorem{theorem}{Theorem}[section]
\newtheorem{corollary}[theorem] {Corollary}
\newtheorem{definition}[theorem]{Definition}
\title{This is the title}
\begin{document}
\hrule\hrule\hrule\hrule\hrule
\vspace{0.3cm}	
\begin{center}
{\bf{NONLINEAR MACCONE-PATI  UNCERTAINTY PRINCIPLE}}\\
\vspace{0.3cm}
\hrule\hrule\hrule\hrule\hrule
\vspace{0.3cm}
\textbf{K. MAHESH KRISHNA}\\
Post Doctoral Fellow \\
Statistics and Mathematics Unit\\
Indian Statistical Institute, Bangalore Centre\\
Karnataka 560 059, India\\
Email:  kmaheshak@gmail.com\\

Date: \today
\end{center}

\hrule\hrule
\vspace{0.5cm}
\textbf{Abstract}:  We show that one of the two important uncertainty principles derived by Maccone and Pati \textit{[Phys. Rev. Lett., 2014]} can be derived for arbitrary maps defined  on subsets of $\mathcal{L}^p$ spaces for $1< p<\infty$. Our main tool is the Clarkson inequalities. We also derive a nonlinear uncertainty principle for  weak parallelogram spaces  and Type-p Banach spaces.

\textbf{Keywords}:   Uncertainty Principle, Lebesgue space, Clarkson inequality, Parallelogram space, Type of Banach space.

\textbf{Mathematics Subject Classification (2020)}: 46B20, 46E30.\\

\hrule

\hrule
\section{Introduction}
Let $\mathcal{H}$ be a complex Hilbert space and $A$ be a possibly unbounded self-adjoint operator defined on domain $\mathcal{D}(A)\subseteq \mathcal{H}$. For $h \in \mathcal{D}(A)$ with $\|h\|=1$, define the uncertainty of $A$ at the point $h$ as 
\begin{align*}
	\Delta _h(A)\coloneqq \|Ah-\langle Ah, h \rangle h \|=\sqrt{\|Ah\|^2-\langle Ah, h \rangle^2}. 
\end{align*}
In 1929, Robertson \cite{ROBERTSON} derived the following mathematical form of the uncertainty principle of Heisenberg derived in 1927 \cite{HEISENBERG}. Recall that for two operators $A:	\mathcal{D}(A)\to  \mathcal{H}$ and $B:	\mathcal{D}(B)\to  \mathcal{H}$, we define $[A,B] \coloneqq AB-BA$ and $\{A,B\}\coloneqq AB+BA$.
\begin{theorem} \cite{ROBERTSON, HEISENBERG, VONNEUMANNBOOK, DEBNATHMIKUSINSKI}  (\textbf{Heisenberg-Robertson Uncertainty Principle})
Let  $A:	\mathcal{D}(A)\to  \mathcal{H}$ and $B:	\mathcal{D}(B)\to  \mathcal{H}$  be self-adjoint operators. Then for all $h \in \mathcal{D}(AB)\cap  \mathcal{D}(BA)$ with $\|h\|=1$, we have 
\begin{align}\label{HR}
 \frac{1}{2} \left(\Delta _h(A)^2+	\Delta _h(B)^2\right)\geq \frac{1}{4} \left(\Delta _h(A)+	\Delta _h(B)\right)^2 \geq  \Delta _h(A)	\Delta _h(B)   \geq  \frac{1}{2}|\langle [A,B]h, h \rangle |.
\end{align}
\end{theorem}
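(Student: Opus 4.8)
The plan is to establish the chain from left to right, observing that only the rightmost inequality carries real content; the first two are purely algebraic. Writing $a \coloneqq \Delta_h(A)$ and $b \coloneqq \Delta_h(B)$, the leftmost inequality $\frac{1}{2}(a^2+b^2) \geq \frac{1}{4}(a+b)^2$ rearranges to $(a-b)^2 \geq 0$, and the middle inequality $\frac{1}{4}(a+b)^2 \geq ab$ is the arithmetic--geometric mean inequality, again equivalent to $(a-b)^2 \geq 0$. Both therefore follow at once, the uncertainties being nonnegative reals by definition.

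For the substantive inequality $\Delta_h(A)\Delta_h(B) \geq \frac{1}{2}\lvert\langle [A,B]h, h\rangle\rvert$, I would introduce the centered operators $\tilde{A} \coloneqq A - \langle Ah, h\rangle I$ and $\tilde{B} \coloneqq B - \langle Bh, h\rangle I$. Since $A$ and $B$ are self-adjoint, the expectations $\langle Ah, h\rangle$ and $\langle Bh, h\rangle$ are real, so $\tilde{A}$ and $\tilde{B}$ are again self-adjoint, and by definition $\Delta_h(A) = \|\tilde{A}h\|$, $\Delta_h(B) = \|\tilde{B}h\|$. The crucial algebraic point is that shifting by scalars leaves the commutator unchanged, $[\tilde{A}, \tilde{B}] = [A, B]$.

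The heart of the argument is then one application of Cauchy--Schwarz followed by passing to the imaginary part. I would write
\begin{align*}
\Delta_h(A)\Delta_h(B) = \|\tilde{A}h\|\,\|\tilde{B}h\| \geq \lvert\langle \tilde{A}h, \tilde{B}h\rangle\rvert \geq \lvert\operatorname{Im}\langle \tilde{A}h, \tilde{B}h\rangle\rvert,
\end{align*}
and then compute, using the self-adjointness of $\tilde{A}$ and $\tilde{B}$ to move each operator across the inner product, that $\operatorname{Im}\langle \tilde{A}h, \tilde{B}h\rangle = \frac{1}{2i}\langle (\tilde{B}\tilde{A} - \tilde{A}\tilde{B})h, h\rangle = -\frac{1}{2i}\langle [A,B]h, h\rangle$. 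Because $[A,B]$ is skew-adjoint, $\langle [A,B]h, h\rangle$ is purely imaginary, so this quantity is real and its modulus equals $\frac{1}{2}\lvert\langle [A,B]h, h\rangle\rvert$, which closes the chain.

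The only genuine obstacle is bookkeeping rather than ideas: since $A$ and $B$ may be unbounded, one must verify that every manipulation is licit on the stated domain. The hypothesis $h \in \mathcal{D}(AB)\cap\mathcal{D}(BA)$ is precisely what guarantees that $\langle \tilde{A}h, \tilde{B}h\rangle$, the commutator $[A,B]h$, and the products $\tilde{A}\tilde{B}h$ and $\tilde{B}\tilde{A}h$ are all well defined, so that transferring operators across $\langle\cdot,\cdot\rangle$ via self-adjointness is justified. I expect this domain tracking to be the main point requiring care.
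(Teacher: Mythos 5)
Your argument is correct and complete: the first two inequalities are indeed just $(a-b)^2\geq 0$ in disguise, and the Cauchy--Schwarz/imaginary-part computation with the centered operators $\tilde{A}$, $\tilde{B}$ (together with the observation that scalar shifts do not change the commutator) is the standard Robertson proof. The paper itself offers no proof of this theorem --- it is quoted from the cited literature --- so there is nothing to diverge from; your domain remarks at the end correctly identify the only point where care is needed for unbounded $A$ and $B$.
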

In 1930,  Schrodinger improved Inequality (\ref{HR}) \cite{SCHRODINGER}. 
\begin{theorem} \cite{SCHRODINGER}
	(\textbf{Heisenberg-Robertson-Schrodinger  Uncertainty Principle})
	Let  $A:	\mathcal{D}(A)\to  \mathcal{H}$ and $B:	\mathcal{D}(B)\to  \mathcal{H}$  be self-adjoint operators. Then for all $h \in \mathcal{D}(AB)\cap  \mathcal{D}(BA)$ with $\|h\|=1$, we have 
	\begin{align*}
		\Delta _h(A)	\Delta _h(B)    \geq |\langle Ah, Bh \rangle-\langle Ah, h \rangle \langle Bh, h \rangle|
		=\frac{\sqrt{|\langle [A,B]h, h \rangle |^2+|\langle \{A,B\}h, h \rangle -2\langle Ah, h \rangle\langle Bh, h \rangle|^2}}{2}.
	\end{align*}	
\end{theorem}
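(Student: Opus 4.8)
The plan is to reduce everything to a single application of the Cauchy--Schwarz inequality together with a real/imaginary part decomposition. First I would introduce the two ``centered'' vectors
\[
f \coloneqq Ah - \langle Ah, h\rangle h, \qquad g \coloneqq Bh - \langle Bh, h\rangle h,
\]
so that by definition $\Delta_h(A) = \|f\|$ and $\Delta_h(B) = \|g\|$. Applying the Cauchy--Schwarz inequality to $f$ and $g$ gives $\Delta_h(A)\Delta_h(B) = \|f\|\,\|g\| \geq |\langle f, g\rangle|$, which reduces the first inequality to computing $\langle f, g\rangle$.

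Next I would expand $\langle f, g\rangle$ directly. Since $A$ and $B$ are self-adjoint, the diagonal quantities $\langle Ah, h\rangle$ and $\langle Bh, h\rangle$ are real, and using $\|h\| = 1$ together with $\langle h, Bh\rangle = \overline{\langle Bh, h\rangle} = \langle Bh, h\rangle$ the cross terms collapse, so that $\langle f, g\rangle = \langle Ah, Bh\rangle - \langle Ah, h\rangle\langle Bh, h\rangle$. This establishes $\Delta_h(A)\Delta_h(B) \geq |\langle Ah, Bh\rangle - \langle Ah, h\rangle\langle Bh, h\rangle|$, which is the first assertion.

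For the claimed equality, write $z \coloneqq \langle Ah, Bh\rangle - \langle Ah, h\rangle\langle Bh, h\rangle$ and use $|z|^2 = (\operatorname{Re} z)^2 + (\operatorname{Im} z)^2$. The key identities come from self-adjointness and the domain hypothesis $h \in \mathcal{D}(AB)\cap\mathcal{D}(BA)$, which license moving operators across the inner product: $\langle Ah, Bh\rangle = \langle BAh, h\rangle$ and $\overline{\langle Ah, Bh\rangle} = \langle Bh, Ah\rangle = \langle ABh, h\rangle$. Adding and subtracting these gives $2\operatorname{Re}\langle Ah, Bh\rangle = \langle \{A,B\}h, h\rangle$ and $2i\operatorname{Im}\langle Ah, Bh\rangle = -\langle [A,B]h, h\rangle$. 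Since the subtracted term $\langle Ah, h\rangle\langle Bh, h\rangle$ is real, it only shifts the real part, so $\operatorname{Re} z = \tfrac{1}{2}\bigl(\langle \{A,B\}h, h\rangle - 2\langle Ah, h\rangle\langle Bh, h\rangle\bigr)$ while $\operatorname{Im} z$ is unchanged and satisfies $|\operatorname{Im} z| = \tfrac{1}{2}|\langle [A,B]h, h\rangle|$ (note $[A,B]$ is anti-self-adjoint, so $\langle [A,B]h, h\rangle$ is purely imaginary).

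Substituting into $|z|^2 = (\operatorname{Re} z)^2 + (\operatorname{Im} z)^2$, and noting that both $\langle \{A,B\}h, h\rangle$ and $\langle Ah, h\rangle\langle Bh, h\rangle$ are real so that the real part equals its own absolute value, yields the claimed formula. I do not anticipate a serious obstacle: the only point requiring care is the bookkeeping of complex conjugates, and the verification that every transfer of an operator across the inner product is legitimate, which is precisely what the hypothesis $h \in \mathcal{D}(AB)\cap\mathcal{D}(BA)$ guarantees.
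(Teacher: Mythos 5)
Your proposal is correct: the Cauchy--Schwarz argument applied to the centered vectors $f = Ah - \langle Ah,h\rangle h$ and $g = Bh - \langle Bh,h\rangle h$, followed by the real/imaginary decomposition of $\langle Ah,Bh\rangle - \langle Ah,h\rangle\langle Bh,h\rangle$ via $\langle Ah,Bh\rangle = \langle BAh,h\rangle$ and $\overline{\langle Ah,Bh\rangle} = \langle ABh,h\rangle$, is the standard and complete proof of Schr\"odinger's inequality, and your handling of the domain hypothesis and of the reality of $\langle Ah,h\rangle$, $\langle Bh,h\rangle$ is exactly what is needed. Note, however, that the paper itself offers no proof of this statement --- it is quoted as a classical result with a citation to Schr\"odinger --- so there is no internal argument to compare against; your proof stands on its own and fills in what the paper leaves to the reference.
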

A fundamental drawback of Inequality (\ref{HR}) is that if $h$ satisfies $ABh=BAh$, then the right side is zero. In 2014,  Maccone and Pati derived the following two uncertainty principles which overturned this  problem \cite{MACCONEPATI}.
\begin{theorem}  \cite{MACCONEPATI} (\textbf{Maccone-Pati Uncertainty Principle}) \label{MP}
Let  $A:	\mathcal{D}(A)\to  \mathcal{H}$ and $B:	\mathcal{D}(B)\to  \mathcal{H}$  be self-adjoint operators. Then for all $h \in \mathcal{D}(A)\cap  \mathcal{D}(B)$ with $\|h\|=1$, we have 
\begin{align*}
	\Delta _h(A)^2+	\Delta _h(B)^2 \geq      \frac{1}{2} \left(|\langle (A+B)h, k \rangle|^2+|\langle (A-B)h, k \rangle|^2\right), \quad \forall k \in \mathcal{H} \text{ satisfying } \|k\|=1, \langle h, k \rangle =0.
\end{align*}		
\end{theorem}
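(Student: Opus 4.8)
The plan is to reduce both sides to expressions involving the two ``centered'' vectors $f \coloneqq Ah - \langle Ah, h \rangle h$ and $g \coloneqq Bh - \langle Bh, h \rangle h$, and then to exploit the parallelogram law together with the Cauchy-Schwarz inequality. The first observation is that, by the very definition of the uncertainty, $\Delta_h(A)^2 = \|f\|^2$ and $\Delta_h(B)^2 = \|g\|^2$, so the left-hand side is simply $\|f\|^2 + \|g\|^2$. Moreover, since $A$ and $B$ are self-adjoint, $\langle Ah, h \rangle$ and $\langle Bh, h \rangle$ are real, and one checks directly that $f \perp h$ and $g \perp h$.

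Next I would rewrite the right-hand side in terms of $f$ and $g$. Fix a unit vector $k$ with $\langle h, k \rangle = 0$. Because $k$ is orthogonal to $h$, the rank-one correction in $f$ and $g$ does not see $k$; that is,
\[
\langle f, k \rangle = \langle Ah, k \rangle - \langle Ah, h \rangle \langle h, k \rangle = \langle Ah, k \rangle,
\]
and likewise $\langle g, k \rangle = \langle Bh, k \rangle$. Consequently $\langle (A+B)h, k \rangle = \langle f+g, k \rangle$ and $\langle (A-B)h, k \rangle = \langle f-g, k \rangle$, so the right-hand side becomes $\frac{1}{2}\left(|\langle f+g, k \rangle|^2 + |\langle f-g, k \rangle|^2\right)$.

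The final step combines two classical inequalities. Since $\|k\| = 1$, the Cauchy-Schwarz inequality gives $|\langle f+g, k \rangle|^2 \leq \|f+g\|^2$ and $|\langle f-g, k \rangle|^2 \leq \|f-g\|^2$, whence the right-hand side is at most $\frac{1}{2}\left(\|f+g\|^2 + \|f-g\|^2\right)$. The parallelogram law then collapses this to $\frac{1}{2}\cdot 2\left(\|f\|^2 + \|g\|^2\right) = \|f\|^2 + \|g\|^2$, which is exactly the left-hand side, completing the argument.

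I expect the step carrying the real weight to be the appeal to the parallelogram law: it is the only place where the Hilbert-space geometry is genuinely used, and it is precisely this identity that fails in $\mathcal{L}^p$ for $p \neq 2$. This already signals the route the nonlinear generalization must take, namely replacing the parallelogram law by the Clarkson inequalities, which is the announced strategy of the paper. The remaining manipulations are routine once the orthogonality $k \perp h$ is used to strip off the mean-subtraction terms.
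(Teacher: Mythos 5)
Your proof is correct and is essentially the argument the paper is built around: the paper cites this theorem without proof, but its own Theorem \ref{NMP} is proved by exactly your scheme (apply the parallelogram law --- there in the guise of the Clarkson inequality --- to the centered vectors $Af-af$ and $Bf-bf$, then use a norm-one functional vanishing at $f$ to kill the mean-subtraction terms), and your argument is precisely the $p=2$ case with $a=\langle Ah,h\rangle$, $b=\langle Bh,h\rangle$ and $\phi=\langle\cdot,k\rangle$. You also correctly identify the parallelogram law as the step that the paper replaces by Clarkson's inequalities.
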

\begin{theorem} \cite{MACCONEPATI} (\textbf{Maccone-Pati Uncertainty Principle})
Let  $A:	\mathcal{D}(A)\to  \mathcal{H}$ and $B:	\mathcal{D}(B)\to  \mathcal{H}$  be self-adjoint operators. Then for all $h \in \mathcal{D}(A)\cap  \mathcal{D}(B)$ with $\|h\|=1$, we have 
\begin{align*}
	\Delta _h(A)^2+	\Delta _h(B)^2 \geq      -i\langle [A,B]h, h \rangle +|\langle (A+iB)h, k \rangle|^2, \quad \forall k \in \mathcal{H} \text{ satisfying } \|k\|=1, \langle h, k \rangle =0.
\end{align*}	
\end{theorem}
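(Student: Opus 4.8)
The plan is to reduce the inequality to a single application of the Cauchy-Schwarz inequality after establishing one algebraic identity. First I would pass to the centered operators $\tilde{A} \coloneqq A - \langle Ah, h \rangle I$ and $\tilde{B} \coloneqq B - \langle Bh, h \rangle I$, which are again self-adjoint and satisfy $\Delta_h(A) = \|\tilde{A}h\|$ and $\Delta_h(B) = \|\tilde{B}h\|$, together with $\langle \tilde{A}h, h \rangle = \langle \tilde{B}h, h \rangle = 0$. Because the subtracted scalars commute with every operator, one also has the convenient fact $[\tilde{A}, \tilde{B}] = [A, B]$.

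The crux of the argument is the identity
\begin{align*}
\|(\tilde{A} + i\tilde{B})h\|^2 = \Delta_h(A)^2 + \Delta_h(B)^2 + i\langle [A,B]h, h \rangle.
\end{align*}
I would obtain this by expanding the left-hand side as an inner product and collecting the two cross terms $-i\langle \tilde{A}h, \tilde{B}h \rangle + i\langle \tilde{B}h, \tilde{A}h \rangle$; using self-adjointness of $\tilde{A}$ and $\tilde{B}$ to move each operator across the inner product turns this into $i\langle (\tilde{A}\tilde{B} - \tilde{B}\tilde{A})h, h \rangle = i\langle [A,B]h, h \rangle$. Since $[A,B]$ is skew-adjoint, the quantity $i\langle [A,B]h, h \rangle$ is real, which is exactly what makes the statement meaningful.

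Next I would record that the orthogonality condition $\langle h, k \rangle = 0$ allows me to replace the centered vector by the original one when pairing against $k$:
\begin{align*}
\langle (\tilde{A} + i\tilde{B})h, k \rangle = \langle (A + iB)h, k \rangle - (\langle Ah, h \rangle + i\langle Bh, h \rangle)\langle h, k \rangle = \langle (A + iB)h, k \rangle.
\end{align*}
Finally, the Cauchy-Schwarz inequality together with $\|k\| = 1$ gives $|\langle (\tilde{A}+i\tilde{B})h, k \rangle|^2 \leq \|(\tilde{A}+i\tilde{B})h\|^2$; substituting the displayed identity and moving $i\langle [A,B]h, h \rangle$ to the other side produces exactly the claimed bound. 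I do not anticipate any genuine obstacle here — the proof is essentially one identity followed by Cauchy-Schwarz, and the only point that demands care is the bookkeeping of the factors of $i$ and the conjugate-linearity convention in the expansion of the norm, so that the cross terms assemble into the commutator $[A,B]$ rather than the anticommutator $\{A,B\}$.
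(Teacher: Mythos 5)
Your argument is correct, but note that the paper does not actually prove this statement: it is quoted verbatim from Maccone and Pati with a citation, and the paper's own proofs concern only the nonlinear $\mathcal{L}^p$ analogues of the \emph{other} Maccone--Pati inequality (Theorem \ref{MP}). Your proof --- expand $\|(\tilde{A}+i\tilde{B})h\|^2$ to get $\Delta_h(A)^2+\Delta_h(B)^2+i\langle [A,B]h,h\rangle$, then apply Cauchy--Schwarz against the unit vector $k\perp h$ --- is precisely the original Maccone--Pati argument, so there is nothing to compare beyond saying you have correctly reconstructed the cited source's proof. The only point worth flagging is that since the hypothesis is $h\in\mathcal{D}(A)\cap\mathcal{D}(B)$ rather than $h\in\mathcal{D}(AB)\cap\mathcal{D}(BA)$, the symbol $\langle [A,B]h,h\rangle$ must be read as $\langle \tilde{B}h,\tilde{A}h\rangle-\langle \tilde{A}h,\tilde{B}h\rangle$ (which is how your cross terms actually arise) rather than literally moving both operators onto $h$; with that reading your identity needs no domain assumptions beyond those stated.
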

In this note, we show that Theorem \ref{MP} can be generalized  even for arbitrary maps on Lebesgue  spaces using Clarkson inequalities. We also derive uncertainty principle for  Banach spaces satisfying weak parallelogram law and  Type-p Banach spaces. 

Our main motivation comes from the sentence `The first proof, based on the parallelogram law, was communicated to us by an anonymous referee, while the second (independent) proof was our original argument'' given in  \cite{MACCONEPATI}. Note that Clarkson inequalities are generalizations of Jordan-von Neumann parallelogram law in Hilbert space \cite{JORDANVONNEUMANN}.

\section{Nonlinear Maccone-Pati Uncertainty Principle}
We first define the uncertainty for maps on Lebesgue  spaces. Let $\mathcal{M}\subseteq \mathcal{L}^p(\Omega, \mu)$ be a subset and $A:\mathcal{M}\to \mathcal{L}^p(\Omega, \mu)$ be a map (need not be linear or Lipschitz). Given $f\in \mathcal{M}$ and $a \in \mathbb{C}$, we define the uncertainty at $f$ relative to $a$ as 
\begin{align*}
	\Delta _f(A, a)\coloneqq \|Af-af \|_p. 
\end{align*}
To derive our first uncertainty principle we need the following breakthrough inequalities of Clarkson.
\begin{theorem} \cite{CLARKSON, GARLING, RAMASWAMY} (\textbf{Clarkson Inequalities})\label{C} 
Let  $(\Omega, \mu)$ be a measure space.
\begin{enumerate}[\upshape(i)]
	\item Let $2\leq p <\infty$. Then 
\begin{align*}
	\|f\|_p^p+\|g\|_p^p\geq \frac{1}{2^{p-1}} \left(\|f+g\|_p^p+\|f-g\|_p^p\right), \quad \forall f, g \in \mathcal{L}^p(\Omega, \mu).
\end{align*}
\item Let $1< p \leq 2$. Then 
\begin{align*}
	\|f\|_p^p+\|g\|_p^p\geq \frac{1}{2} \left(\|f+g\|_p^p+\|f-g\|_p^p\right), \quad \forall f, g \in \mathcal{L}^p(\Omega, \mu).
\end{align*}
\item  Let $2\leq p <\infty$ and $q$ be the conjugate index of $p$. Then 
\begin{align*}
	\|f\|_p^q+\|g\|_p^q\geq \left(\frac{1}{2} \left(\|f+g\|_p^p+\|f-g\|_p^p\right)\right)^\frac{1}{p-1}, \quad \forall f, g \in \mathcal{L}^p(\Omega, \mu).
\end{align*}
\item  Let $1<p\leq 2$ and $q$ be the conjugate index of $p$. Then 
\begin{align*}
	\|f\|_p^p+\|g\|_p^p\geq \left(\frac{1}{2} \left(\|f+g\|_p^q+\|f-g\|_p^q\right)\right)^\frac{1}{q-1}, \quad \forall f, g \in \mathcal{L}^p(\Omega, \mu).
\end{align*}
\item Let $1<p<\infty$ and $q$ be the conjugate index of $p$. Let $1<r\leq \min\{p,q\}$ and $s$ be the conjugate index of $r$. Then 
\begin{align*}
&	\|f\|_p^r+\|g\|_p^r\geq \left(\frac{1}{2} \left(\|f+g\|_p^s+\|f-g\|_p^s\right)\right)^\frac{1}{s-1}, \\
	&	\|f\|_p^s+\|g\|_p^s\geq \frac{1}{2^{s-1}} \left(\|f+g\|_p^s+\|f-g\|_p^s\right), \\
&			\|f\|_p^r+\|g\|_p^r\geq \frac{1}{2} \left(\|f+g\|_p^r+\|f-g\|_p^r\right), 
	\quad \forall f, g \in \mathcal{L}^p(\Omega, \mu).
\end{align*}
\end{enumerate}	
\end{theorem}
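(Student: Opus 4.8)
The plan is to split Theorem \ref{C} into its ``first-type'' parts (i), (ii), which reduce to pointwise scalar estimates, and its ``second-type'' parts (iii), (iv), which are genuinely global; part (v) will then follow by interpolating these against the monotonicity of the finite $\ell^r$ norms.

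First I would dispose of (i) and (ii) by integration. Both are equivalent to estimates of the form $\|f+g\|_p^p+\|f-g\|_p^p\le C_p(\|f\|_p^p+\|g\|_p^p)$, so it suffices to prove the scalar inequalities $|a+b|^p+|a-b|^p\le 2^{p-1}(|a|^p+|b|^p)$ for $2\le p<\infty$ and $|a+b|^p+|a-b|^p\le 2(|a|^p+|b|^p)$ for $1<p\le 2$, for all $a,b\in\mathbb{C}$, and then integrate over $(\Omega,\mu)$. Both follow from the parallelogram identity $|a+b|^2+|a-b|^2=2(|a|^2+|b|^2)$ together with convexity of $t\mapsto t^{p/2}$. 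For $p\ge 2$ one has $p/2\ge 1$, so $t^{p/2}$ is superadditive and convex: with $u=|a+b|^2$ and $v=|a-b|^2$,
\[
|a+b|^p+|a-b|^p=u^{p/2}+v^{p/2}\le (u+v)^{p/2}=2^{p/2}(|a|^2+|b|^2)^{p/2}\le 2^{p-1}(|a|^p+|b|^p),
\]
the last step being Jensen's inequality $\big(\tfrac{|a|^2+|b|^2}{2}\big)^{p/2}\le\tfrac12(|a|^p+|b|^p)$. For $1<p\le 2$ one has $p/2\le 1$, so $t\mapsto t^{p/2}$ is concave and subadditive; maximizing $u^{p/2}+v^{p/2}$ over the admissible range of $u$ (with $u+v$ pinned by the parallelogram law) places the maximum at $u=v$, giving $|a+b|^p+|a-b|^p\le 2(|a|^2+|b|^2)^{p/2}\le 2(|a|^p+|b|^p)$.

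The crux is (iii) and (iv), which cannot be reduced to a pointwise inequality. My organizing observation is a duality: writing $S(f,g)=(f+g,f-g)$ on pairs, with the pairing $\langle(f,g),(\varphi,\psi)\rangle=\int f\varphi\,d\mu+\int g\psi\,d\mu$, one has $S^\ast=S$ and $S^{-1}=\tfrac12 S$, while $(\ell^p_2(\mathcal{L}^p))^\ast=\ell^q_2(\mathcal{L}^q)$. Tracking the norm of $S$ between these mixed spaces shows that (iii) for a given $p\ge 2$ and (iv) for the conjugate index $q$ are equivalent, so it suffices to prove one of them outright; I would prove (iii). After the invertible substitution $(f,g)\mapsto(f+g,f-g)$, (iii) reads $\|f+g\|_p^q+\|f-g\|_p^q\ge 2(\|f\|_p^p+\|g\|_p^p)^{q-1}$, and here Minkowski's integral inequality points the right way: since $q\le 2\le p$,
\[
\|f+g\|_p^q+\|f-g\|_p^q\ge \big\||f+g|^q+|f-g|^q\big\|_{p/q},
\]
a step that is moreover an equality when $f,g$ have disjoint supports. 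Combining it with the sharp phase-optimized pointwise bound $|a+b|^q+|a-b|^q\ge (|a|+|b|)^q+\big||a|-|b|\big|^q$ (valid for $q\le 2$, again by concavity of $t^{q/2}$) reduces (iii) to the two-variable estimate
\[
\big\|(|f|+|g|)^q+\big||f|-|g|\big|^q\big\|_{p/q}\ge 2\big(\|f\|_p^p+\|g\|_p^p\big)^{q-1}.
\]
I expect this last, Hanner-type inequality to be the main obstacle: both reductions above are already sharp at the two extremal configurations $f=g$ and $\operatorname{supp}f\cap\operatorname{supp}g=\varnothing$, so no slack may be lost, and the estimate must be forced through by a direct convexity analysis in the nonnegative variables $|f|,|g|$ (equivalently, extracted from Hanner's inequality in $\mathcal{L}^p$). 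Once (iii) is proved, (iv) follows by transporting the estimate for $S$ on $\ell^p_2(\mathcal{L}^p)$ to the dual estimate on $\ell^q_2(\mathcal{L}^q)$ via $S^{-1}=\tfrac12 S$, keeping track of the constant $2^{1/q}$.

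Finally, for (v) --- the generalized inequalities with a free exponent $1<r\le\min\{p,q\}$ and conjugate $s\ge 2$ --- I would deduce all three displays from (i)--(iv) together with the elementary comparisons $\|x\|_{\ell^s_2}\le\|x\|_{\ell^r_2}\le 2^{1/r-1/s}\|x\|_{\ell^s_2}$ on $\mathbb{C}^2$. The third display is the first-type estimate with $r$ in place of the ambient exponent, the second is its $s$-companion, and the first is the second-type estimate; in each case the hypothesis $r\le\min\{p,q\}\le 2\le s$ is exactly what lets the convexity (respectively duality) step from the proofs of (i)--(iv) run with $r$ or $s$ replacing $p$ or $q$. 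I expect the only genuine work here to be bookkeeping of constants and checking the endpoints $r=\min\{p,q\}$ and $r=2$, the latter returning the Hilbert-space parallelogram law.
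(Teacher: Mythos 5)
The paper itself gives no proof of Theorem \ref{C}: it is quoted directly from \cite{CLARKSON, GARLING, RAMASWAMY}, so your argument has to stand entirely on its own, and as written it does not. Parts (i) and (ii) are fine: the reduction to the scalar inequalities $|a+b|^p+|a-b|^p\le 2^{p-1}(|a|^p+|b|^p)$ ($p\ge 2$) and $|a+b|^p+|a-b|^p\le 2(|a|^p+|b|^p)$ ($1<p\le 2$) via the parallelogram identity and convexity/concavity of $t\mapsto t^{p/2}$, followed by integration, is correct and standard. The duality observation that (iii) for $p$ and (iv) for its conjugate $q$ are equivalent (via $S(f,g)=(f+g,f-g)$, $S^\ast=S$, $S^{-1}=\tfrac12 S$ between $\ell^p_2(\mathcal{L}^p)$ and its dual) is also correct. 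The problem is that you never prove either one. After your substitution, the triangle inequality in $\mathcal{L}^{p/q}$, and the scalar bound $|a+b|^q+|a-b|^q\ge(|a|+|b|)^q+\bigl||a|-|b|\bigr|^q$, you arrive at a two-variable Hanner-type inequality that you explicitly flag as ``the main obstacle'' and defer to ``a direct convexity analysis'' or to Hanner's inequality. That residual inequality is not easier than Clarkson's second inequality --- Hanner's inequality is a \emph{strengthening} of it --- so the heart of (iii)/(iv) is missing. (The classical route proves (iv) directly from the scalar lemma $|a+b|^q+|a-b|^q\le 2(|a|^p+|b|^p)^{q-1}$ together with the \emph{reverse} triangle inequality $\||u|+|v|\|_{p/q}\ge\||u|\|_{p/q}+\||v|\|_{p/q}$, valid because $p/q\le 1$ when $p\le 2$; the scalar lemma is the genuinely hard step and it, or an equivalent, must appear somewhere.)

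Part (v) also does not follow as you claim. Rewriting the first display using $1/(s-1)=r/s$, it is equivalent to $(\|f\|_p^r+\|g\|_p^r)^{1/r}\ge 2^{-1/s}(\|f+g\|_p^s+\|f-g\|_p^s)^{1/s}$, while (iii)/(iv) give the same statement with $(\min\{p,q\},\max\{p,q\})$ in place of $(r,s)$ and constant $2^{-1/\max\{p,q\}}$. Monotonicity of the two-point $\ell^t$ norms lets you pass from $\max\{p,q\}$ to the larger exponent $s$ on the right, but it leaves the constant at $2^{-1/\max\{p,q\}}$, which is \emph{strictly smaller} than the required $2^{-1/s}$ whenever $s>\max\{p,q\}$; so the naive comparison proves a strictly weaker inequality, and a similar loss occurs in the third display. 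The generalized Clarkson inequalities in (v) are normally obtained by Riesz--Thorin interpolation applied to $S$ between the mixed-norm spaces $\ell^r_2(\mathcal{L}^p)$ and $\ell^s_2(\mathcal{L}^p)$, interpolating between the trivial $\ell^1_2\to\ell^\infty_2$ bound and the parallelogram-type endpoint; without an argument of that kind (or an independent proof of the Hanner-type lemma above), both the second-type inequalities (iii)--(iv) and all of (v) remain unproved.
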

\begin{theorem} (\textbf{Nonlinear Maccone-Pati Uncertainty Principle}) \label{NMP}
	Let $(\Omega, \mu)$ be a measure space. Let 	$\mathcal{M}, \mathcal{N}\subseteq \mathcal{L}^p(\Omega, \mu)$ be  subsets and $A:\mathcal{M}\to \mathcal{L}^p(\Omega, \mu)$, $B:\mathcal{N}\to \mathcal{L}^p(\Omega, \mu)$ be  maps. Let  $f\in \mathcal{M} \cap \mathcal{N}$  and $a, b \in \mathbb{C}$.
\begin{enumerate}[\upshape(i)]
\item Let $2\leq p <\infty$.   Then 
\begin{align*}
	\Delta _f(A, a)^p+	\Delta _f(B,b)^p&\geq 	\frac{1}{2^{p-1}}\left(|\phi((A+B)f))|^p+|\phi((A-B)f)|^p\right), \quad \forall \phi  \in (\mathcal{L}^p(\Omega, \mu))^*\\
	& \quad \text{ satisfying } \|\phi\|\leq 1, \phi(f) =0.
\end{align*}
\item Let $1< p \leq 2$.   Then 
\begin{align*}
	\Delta _f(A, a)^p+	\Delta _f(B,b)^p&\geq 	\frac{1}{2}\left(|\phi((A+B)f))|^p+|\phi((A-B)f)|^p\right), \quad \forall \phi  \in (\mathcal{L}^p(\Omega, \mu))^*\\
	& \quad \text{ satisfying } \|\phi\|\leq 1, \phi(f) =0.
\end{align*}
\item  $2\leq p <\infty$ and $q$ be the conjugate index of $p$. Then 
\begin{align*}
	\Delta _f(A, a)^q+	\Delta _f(B,b)^q&\geq \left(	\frac{1}{2}\left(|\phi((A+B)f))|^p+|\phi((A-B)f)|^p\right)\right)^\frac{1}{p-1}, \quad \forall \phi  \in (\mathcal{L}^p(\Omega, \mu))^*\\
	& \quad \text{ satisfying } \|\phi\|\leq 1, \phi(f) =0.
\end{align*}
\item  $1<p \leq 2$ and $q$ be the conjugate index of $p$. Then 
\begin{align*}
	\Delta _f(A, a)^p+	\Delta _f(B,b)^p&\geq \left(	\frac{1}{2}\left(|\phi((A+B)f))|^q+|\phi((A-B)f)|^q\right)\right)^\frac{1}{q-1}, \quad \forall \phi  \in (\mathcal{L}^p(\Omega, \mu))^*\\
	& \quad \text{ satisfying } \|\phi\|\leq 1, \phi(f) =0.
\end{align*}
\item Let $1<p<\infty$ and $q$ be the conjugate index of $p$. Let $1<r\leq \min\{p,q\}$ and $s$ be the conjugate index of $r$. Then 
\begin{align*}
		\Delta _f(A, a)^r+	\Delta _f(B,b)^r&\geq \left(	\frac{1}{2}\left(|\phi((A+B)f))|^s+|\phi((A-B)f)|^s\right)\right)^\frac{1}{s-1}, \\
		\Delta _f(A, a)^s+	\Delta _f(B,b)^s&\geq 	\frac{1}{2^{s-1}}\left(|\phi((A+B)f))|^s+|\phi((A-B)f)|^s\right),\\
				\Delta _f(A, a)^r+	\Delta _f(B,b)^r&\geq 	\frac{1}{2}\left(|\phi((A+B)f))|^r+|\phi((A-B)f)|^r\right), \quad \forall \phi  \in (\mathcal{L}^p(\Omega, \mu))^*\\
		& \quad \text{ satisfying } \|\phi\|\leq 1, \phi(f) =0.
\end{align*}
\end{enumerate}
\end{theorem}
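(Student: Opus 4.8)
The plan is to reduce each of the five parts to the matching Clarkson inequality in Theorem \ref{C} by one substitution followed by one application of the functional $\phi$. Throughout I write $u \coloneqq Af - af$ and $v \coloneqq Bf - bf$, so that $\Delta_f(A,a) = \|u\|_p$ and $\Delta_f(B,b) = \|v\|_p$, and I record the two identities $u+v = (A+B)f - (a+b)f$ and $u-v = (A-B)f - (a-b)f$.

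First I would fix $\phi \in (\mathcal{L}^p(\Omega,\mu))^*$ with $\|\phi\| \le 1$ and $\phi(f) = 0$. The single nonroutine observation is that the condition $\phi(f)=0$ annihilates the scalar terms: since $|\phi(w)| \le \|\phi\|\,\|w\|_p \le \|w\|_p$ for every $w \in \mathcal{L}^p(\Omega,\mu)$, linearity of $\phi$ gives
\[
\|u+v\|_p \;\ge\; |\phi(u+v)| \;=\; |\phi((A+B)f) - (a+b)\phi(f)| \;=\; |\phi((A+B)f)|,
\]
and likewise $\|u-v\|_p \ge |\phi((A-B)f)|$. This is exactly the step that plays the role of the orthogonality hypothesis $\langle h,k\rangle = 0$ in the Hilbert-space Theorem \ref{MP}.

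For part (i) I would then apply Clarkson (i) to the pair $u,v$, obtaining $\|u\|_p^p + \|v\|_p^p \ge 2^{-(p-1)}(\|u+v\|_p^p + \|u-v\|_p^p)$, and substitute the two lower bounds just derived for $\|u\pm v\|_p^p$. Part (ii) and the third inequality of (v) are identical, using Clarkson (ii) and the third inequality of Clarkson (v) respectively, and the second inequality of (v) is the same argument with $s$ in place of $p$.

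The remaining cases — (iii), (iv), and the first inequality of (v) — carry an exponent $\tfrac{1}{p-1}$, $\tfrac{1}{q-1}$, or $\tfrac{1}{s-1}$ on the outside. Here the only extra point is that each such exponent is strictly positive (as $p,q,s>1$), so $t\mapsto t^{1/(p-1)}$ and its analogues are increasing on $[0,\infty)$; hence after invoking the appropriate Clarkson inequality for $u,v$ I may push the lower bounds $|\phi((A\pm B)f)|$ through the outer power without reversing the inequality. I do not anticipate a genuine obstacle: the argument is a clean transfer of Clarkson's inequalities, and the only care required is bookkeeping — matching each of the five parts to its Clarkson source and checking that every outer exponent is positive so that monotonicity applies.
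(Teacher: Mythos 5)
Your proposal is correct and follows essentially the same route as the paper: apply the appropriate Clarkson inequality to $u=Af-af$, $v=Bf-bf$, then use $\|\phi\|\le 1$ and $\phi(f)=0$ to replace $\|u\pm v\|_p$ by $|\phi((A\pm B)f)|$. Your explicit remark that the outer exponents in (iii), (iv), and (v) are positive, so the bounds pass through the outer power by monotonicity, cleanly fills in the detail the paper leaves to "the remaining are similar."
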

\begin{proof}
	We prove (i) and remaining are similar. Using (i)  in Theorem \ref{C}
	\begin{align*}
			\Delta _f(A, a)^p+	\Delta _f(B,b)^p&=\|Af-af \|_p^p+\|Bf-bf \|_p^p\\
			&\geq \frac{1}{2^{p-1}} \left(\|(Af-af)+(Bf-bf)\|_p^p+\|(Af-af)-(Bf-bf)\|_p^p\right)\\
			&=\frac{1}{2^{p-1}} \left(\|(A+B)f-(a+b)f\|_p^p+\|(A-B)f-(a-b)f\|_p^p\right)\\
			&\geq \frac{1}{2^{p-1}}\|\phi\|\left(\|(A+B)f-(a+b)f\|_p^p+\|(A-B)f-(a-b)f\|_p^p\right)\\
			&\geq \frac{1}{2^{p-1}}\left(|\phi((A+B)f)-\phi((a+b)f)|^p+|\phi((A-B)f)-\phi((a-b)f)|^p\right)\\
			&=\frac{1}{2^{p-1}}\left(|\phi((A+B)f))|^p+|\phi((A-B)f)|^p\right).
	\end{align*}
\end{proof}
We next note the following extension of Theorem \ref{NMP} (we state generalizations of only (i) and (ii) and others are similar).  Recall that  the collection of all Lipschitz functions $\psi:  \mathcal{L}^p(\Omega, \mu) \to \mathbb{C}$  satisfying  $\psi(0)=0$, denoted by $ \mathcal{L}^p(\Omega, \mu)^\# $ is a Banach space \cite{WEAVER} w.r.t. the Lipschitz norm 
\begin{align*}
	\|\psi\|_{\text{Lip}_0}\coloneqq \sup_{f, g \in \mathcal{L}^p(\Omega, \mu), f\neq g} \frac{|\psi(f)-\psi(g)|}{\|f-g\|_p}.
\end{align*}

\begin{corollary}
	Let $(\Omega, \mu)$ be a measure space. Let 	$\mathcal{M}, \mathcal{N}\subseteq \mathcal{L}^p(\Omega, \mu)$ be  subsets and $A:\mathcal{M}\to \mathcal{L}^p(\Omega, \mu)$, $B:\mathcal{N}\to \mathcal{L}^p(\Omega, \mu)$ be  maps. Let  $f\in \mathcal{M} \cap \mathcal{N}$  and $a, b \in \mathbb{C}$ be such that $a+b=1$.  
	\begin{enumerate}[\upshape(i)]
\item Let $2\leq p <\infty$.  Then 
\begin{align*}
	\Delta _f(A, a)^p+	\Delta _f(B,b)^p&\geq 	\frac{1}{2^{p-1}}\left(|\phi((A+B)f))|^p+|\phi((A-B)f)|^p\right), \quad \forall \phi  \in (\mathcal{L}^p(\Omega, \mu))^\#\\
	& \quad \text{ satisfying } \|\phi\|_{\text{Lip}_0}\leq 1, \phi(f) =0.
\end{align*}	
	\item Let $1< p \leq 2$.  Then 
	\begin{align*}
		\Delta _f(A, a)^p+	\Delta _f(B,b)^p&\geq 	\frac{1}{2}\left(|\phi((A+B)f))|^p+|\phi((A-B)f)|^p\right), \quad \forall \phi  \in (\mathcal{L}^p(\Omega, \mu))^\#\\
		& \quad \text{ satisfying } \|\phi\|_{\text{Lip}_0}\leq 1, \phi(f) =0.
	\end{align*}	
\end{enumerate}
\end{corollary}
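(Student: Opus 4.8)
The plan is to reproduce the proof of Theorem \ref{NMP}(i) line by line, changing only the single step where linearity of the functional was used. Expanding the two uncertainties and feeding the pair $Af-af$ and $Bf-bf$ into Clarkson's inequality (Theorem \ref{C}(i)), then regrouping exactly as there, gives
\[
\Delta_f(A,a)^p+\Delta_f(B,b)^p\ge\frac{1}{2^{p-1}}\Big(\|(A+B)f-(a+b)f\|_p^p+\|(A-B)f-(a-b)f\|_p^p\Big),
\]
and since nothing about $\phi$ has entered yet, this portion is identical to the linear case.

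The difference appears when passing from these norms to $|\phi(\cdot)|$. In Theorem \ref{NMP} the step used $\phi((a\pm b)f)=(a\pm b)\phi(f)=0$, that is, linearity together with $\phi(f)=0$. For $\phi\in(\mathcal{L}^p(\Omega,\mu))^\#$ that identity is unavailable; what survives is the Lipschitz bound $|\phi(g)-\phi(h)|\le\|\phi\|_{\mathrm{Lip}_0}\|g-h\|_p\le\|g-h\|_p$, which also gives $|\phi(g)|\le\|g\|_p$ because $\phi(0)=0$. Applying it with $g=(A+B)f$ and $h=(a+b)f$, the hypothesis $a+b=1$ does precisely the work required: it collapses $(a+b)f$ to $f$, so that $\phi((a+b)f)=\phi(f)=0$ and hence $\|(A+B)f-(a+b)f\|_p\ge|\phi((A+B)f)|$. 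Raising to the $p$-th power disposes of the first summand, and part (ii) is the same with Theorem \ref{C}(ii) in place of Theorem \ref{C}(i).

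The main obstacle is the second summand. The same Lipschitz estimate with $g=(A-B)f$ and $h=(a-b)f$ only yields $\|(A-B)f-(a-b)f\|_p\ge|\phi((A-B)f)-\phi((a-b)f)|$, so to reach $|\phi((A-B)f)|$ I still need $\phi((a-b)f)=0$; this was automatic for a linear functional but is not forced by $a+b=1$ alone for a merely Lipschitz one. Since $\phi$ is guaranteed to vanish only at the two points $0$ and $f$, the clean argument closes exactly when $(a-b)f\in\{0,f\}$, i.e.\ for the two natural normalizations $a=b=\tfrac12$ (invoking $\phi(0)=0$) and $a=1,\,b=0$ (invoking $\phi(f)=0$). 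For other $a,b$ with $a+b=1$ the correction $|\phi((a-b)f)|\le\min(|a-b|,|a-b-1|)\,\|f\|_p$ must be carried through the reverse triangle inequality, and controlling it is the one genuinely nonlinear point on which the proof turns.
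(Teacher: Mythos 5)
Your reconstruction of the intended argument is faithful: the paper gives no separate proof of this corollary (it is merely ``noted'' as an extension of Theorem \ref{NMP}), so the intended proof is exactly the one you wrote, and the obstruction you isolate at the second summand is not a defect of your write-up but a genuine and fatal gap in the statement itself. The hypothesis $a+b=1$ forces $\phi((a+b)f)=\phi(f)=0$, but nothing forces $\phi((a-b)f)=0$ for a merely Lipschitz $\phi$ vanishing only at $0$ and $f$, and the corollary as stated is in fact false. A concrete counterexample: let $p=2$, let $\mu$ be a unit point mass so that $\mathcal{L}^2(\Omega,\mu)\cong\mathbb{C}$, take $f=1$, $a=0$, $b=1$ (so $a+b=1$), $Af=0=af$ and $Bf=f=bf$, and $\phi(x)=\min\{\|x\|_2,\|x-f\|_2\}$, which is $1$-Lipschitz with $\phi(0)=\phi(f)=0$. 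Then $\Delta_f(A,a)^2+\Delta_f(B,b)^2=0$, while
\[
\frac{1}{2}\left(|\phi((A+B)f)|^2+|\phi((A-B)f)|^2\right)=\frac{1}{2}\left(|\phi(f)|^2+|\phi(-f)|^2\right)=\frac{1}{2},
\]
so both (i) and (ii) fail at $p=2$. Your diagnosis of when the argument does close is also exactly right: one needs $(a-b)f$ to lie in the zero set of $\phi$, which under the stated hypotheses means $a-b\in\{0,1\}$, i.e.\ $a=b=\tfrac12$ or $a=1,\,b=0$; alternatively one can repair the corollary by strengthening the hypothesis on $\phi$ to $\phi(\lambda f)=0$ for all scalars $\lambda$ (or at least at $\lambda=a-b$). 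So the proposal correctly locates the missing step, but no proof of the corollary in its stated generality exists, because none can.
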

In 1972, Bynum and Drew derived the following surprising result \cite{BYNUMDREW}. 
\begin{theorem}\cite{BYNUMDREW}\label{BYNUMDREW}
	For $1<p<2$,  
	\begin{align*}
		\|x+y\|_p^2+(p-1)\|x-y\|_p^2\leq 2\left(\|x\|_p^2+\|y\|_p^2\right), \quad \forall x, y \in \ell^p(\mathbb{N}).
	\end{align*}
\end{theorem}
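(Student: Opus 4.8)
The plan is to read Theorem~\ref{BYNUMDREW} as the sharp $2$-uniform smoothness inequality of $\ell^p$ and to obtain it by lifting a single inequality between two scalars up to the sequence level. First I would record the equivalent smoothness form. Substituting $u=x+y$, $v=x-y$ (so that $x=(u+v)/2$, $y=(u-v)/2$), the asserted inequality is, after multiplying by $2$, the same as
\[
\|u+v\|_p^2+\|u-v\|_p^2\ \geq\ 2\|u\|_p^2+2(p-1)\|v\|_p^2,\qquad \forall u,v\in\ell^p(\mathbb{N}).
\]
It therefore suffices to prove this lower bound; note that at $p=2$ it degenerates to the parallelogram law, which already signals that every estimate below must become an identity at $p=2$.

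The engine is the two-point inequality: for $1<p\leq 2$ and all $\alpha,\beta\in\mathbb{C}$,
\[
|\alpha+\beta|^p+|\alpha-\beta|^p\ \geq\ 2\bigl(|\alpha|^2+(p-1)|\beta|^2\bigr)^{p/2}.
\]
I would first reduce this to real scalars. Fixing $|\alpha|,|\beta|$ and writing $|\alpha\pm\beta|^2=s\pm c$ with $s=|\alpha|^2+|\beta|^2$ and $c=2\,\mathrm{Re}(\alpha\overline{\beta})$, the left-hand side equals $\psi(c):=(s+c)^{p/2}+(s-c)^{p/2}$. Since $p/2\leq 1$, $\psi$ is concave, hence minimised at the endpoints $c=\pm2|\alpha||\beta|$; as the right-hand side does not depend on $c$, it is enough to treat $\alpha,\beta$ real and aligned. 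By homogeneity and evenness this becomes the single-variable claim
\[
\Phi(t):=(1+t)^p+|1-t|^p-2\bigl(1+(p-1)t^2\bigr)^{p/2}\ \geq\ 0,\qquad t\geq 0.
\]

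Establishing $\Phi\geq0$ is the only genuine difficulty. For $0\leq t\leq 1$ I would compare the even power series
\[
(1+t)^p+(1-t)^p=2\sum_{k\geq0}\binom{p}{2k}t^{2k},\qquad 2\bigl(1+(p-1)t^2\bigr)^{p/2}=2\sum_{k\geq0}\binom{p/2}{k}(p-1)^k t^{2k}.
\]
The $t^0$ and $t^2$ coefficients agree (both pairs give $2$ and $p(p-1)$), so $\Phi=O(t^4)$ at $0$. For $k\geq2$ the coefficient of $t^{2k}$ in $\Phi$ is $2\bigl(\binom{p}{2k}-\binom{p/2}{k}(p-1)^k\bigr)$; when $k$ is even $\binom{p/2}{k}<0$ makes this manifestly positive, while for odd $k\geq3$ both terms are positive and one must check $\binom{p}{2k}\geq\binom{p/2}{k}(p-1)^k$. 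This sign-and-size comparison of binomial coefficients is the technical crux. For $t\geq1$, where $|1-t|^p=(t-1)^p$ is no longer given by that series, I would finish by hand, using the dominance of $(1+t)^p$ and the elementary bound $(p-1)^{p/2}\leq1$ to control the large-$t$ regime together with a short monotonicity check in between.

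With the scalar inequality in hand, the lift to $\ell^p$ uses only facts valid because $1<p\leq2$. Applying the two-point inequality coordinatewise to $u=(u_n)$, $v=(v_n)$ and summing gives
\[
\|u+v\|_p^p+\|u-v\|_p^p\ \geq\ 2\sum_n\bigl(|u_n|^2+(p-1)|v_n|^2\bigr)^{p/2}=2\|w\|_p^p,\quad w_n:=\bigl(|u_n|^2+(p-1)|v_n|^2\bigr)^{1/2}.
\]
Since $2\geq p$, the power-mean inequality upgrades this to a sum of squares,
\[
\|u+v\|_p^2+\|u-v\|_p^2\ \geq\ 2\Bigl(\tfrac12\bigl(\|u+v\|_p^p+\|u-v\|_p^p\bigr)\Bigr)^{2/p}\ \geq\ 2\|w\|_p^2,
\]
and since $p/2\leq1$ the $\ell^{p/2}$ functional is super-additive (reverse Minkowski), whence, using $w_n^2=|u_n|^2+(p-1)|v_n|^2$,
\[
\|w\|_p^2=\bigl\||u|^2+(p-1)|v|^2\bigr\|_{p/2}\ \geq\ \bigl\||u|^2\bigr\|_{p/2}+(p-1)\bigl\||v|^2\bigr\|_{p/2}=\|u\|_p^2+(p-1)\|v\|_p^2.
\]
Chaining the last three displays yields the smoothness form, and undoing the substitution $u=x+y$, $v=x-y$ recovers Theorem~\ref{BYNUMDREW}. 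The obstacle is thus concentrated entirely in the one-variable estimate $\Phi\geq0$; everything afterward is bookkeeping with the power-mean and reverse-Minkowski inequalities, each of which holds precisely on the range $1<p\leq2$ and collapses to an identity at $p=2$, matching the sharpness of the constant $p-1$.
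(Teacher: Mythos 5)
The paper does not prove this statement: it is quoted from Bynum and Drew with a citation only, so there is no internal proof to compare against and your proposal must stand on its own. Its overall architecture is the standard route (as in Ball--Carlen--Lieb): pass to the smoothness form, reduce to the scalar two-point inequality $|\alpha+\beta|^p+|\alpha-\beta|^p\geq 2\bigl(|\alpha|^2+(p-1)|\beta|^2\bigr)^{p/2}$, and lift to $\ell^p$ via the power-mean inequality and the super-additivity of $\|\cdot\|_{p/2}$. The reduction to real aligned scalars by concavity in $c$, and all three lifting steps, are correct.

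The step you yourself flag as the crux, however, is where the argument genuinely breaks: the termwise comparison of Taylor coefficients is false. For odd $k\geq 3$ the inequality $\binom{p}{2k}\geq\binom{p/2}{k}(p-1)^k$ you say "one must check" fails when $p$ is close to $2$. Concretely, at $p=1.9$, $k=3$ one has $\binom{1.9}{6}=\tfrac{1.9\cdot 0.9\cdot(-0.1)(-1.1)(-2.1)(-3.1)}{720}\approx 0.00170$, while $\binom{0.95}{3}(0.9)^3\approx 0.00831\cdot 0.729\approx 0.00606$, so the coefficient of $t^6$ in $\Phi$ is negative. This is forced rather than accidental: writing $a_3(p)=\binom{p}{6}-\binom{p/2}{3}(p-1)^3$, both terms vanish at $p=2$ and $a_3'(2)=-\tfrac{1}{60}+\tfrac{1}{12}=\tfrac{1}{15}>0$, so $a_3(p)<0$ for all $p$ just below $2$. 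The scalar inequality $\Phi\geq 0$ is true (e.g.\ at $p=1.9$, $t=0.5$ it holds by a margin of about $0.003$), but it cannot be established coefficient by coefficient; one needs a different argument for it, such as the calculus proof of Lemmas 1.1--1.2 in Ball, Carlen and Lieb or Bynum--Drew's original derivation. Together with the only-gestured-at regime $t\geq 1$, this leaves the central estimate of your proof unproved, so the proposal as written is incomplete.
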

Theorem \ref{BYNUMDREW} promoted the notion of parallelogram law spaces by Cheng and Ross \cite{CHENGROSS}.
\begin{definition}\cite{CHENGROSS}
Let $C>0$ and $1<p<\infty$. A Banach space $\mathcal{X}$ is said to satisfy lower p-weak parallelogram law with constant $ C$ if 
	\begin{align*}
	\|x+y\|^p+C\|x-y\|^p\leq 2^{p-1}\left(\|x\|^p+\|y\|^p\right), \quad \forall x, y \in \mathcal{X}.
\end{align*}
in this case, we write $\mathcal{X}$ is $p$-LWP(C).
\end{definition}
For parallelogram law spaces, by following a similar computation as in the proof of Theorem \ref{NMP} we get the following theorem.  
\begin{theorem}
	Let $\mathcal{X}$ be  $p$-LWP(C).  Let 	$\mathcal{M}, \mathcal{N}\subseteq \mathcal{X}$ be  subsets and $A:\mathcal{M}\to \mathcal{X}$, $B:\mathcal{N}\to \mathcal{X}$ be  maps. Let  $x\in \mathcal{M} \cap \mathcal{N}$  and $a, b \in \mathbb{C}$.  Then 
	\begin{align*}
		\Delta _x(A, a)^p+	\Delta _x(B,b)^p&\geq 	\frac{1}{2^{p-1}}\left(|\phi((A+B)x))|^p+C|\phi((A-B)x)|^p\right), \quad \forall \phi  \in \mathcal{X}^*\\
		& \quad \text{ satisfying } \|\phi\|\leq 1, \phi(x) =0.
	\end{align*}	
\end{theorem}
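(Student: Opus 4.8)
The plan is to transcribe the proof of Theorem \ref{NMP}(i) almost verbatim, replacing the single appeal to Clarkson's inequality by the defining inequality of a $p$-LWP(C) space. First I would rewrite the hypothesis $\|u+v\|^p + C\|u-v\|^p \leq 2^{p-1}(\|u\|^p + \|v\|^p)$ in its lower-bound form
\begin{align*}
\|u\|^p + \|v\|^p \geq \frac{1}{2^{p-1}}\left(\|u+v\|^p + C\|u-v\|^p\right),
\end{align*}
valid for all $u, v \in \mathcal{X}$, which is exactly the shape needed to bound a sum of two uncertainties from below.

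Next I would specialize to $u = Ax - ax$ and $v = Bx - bx$, so that the left-hand side is precisely $\Delta_x(A,a)^p + \Delta_x(B,b)^p$, while $u+v = (A+B)x - (a+b)x$ and $u-v = (A-B)x - (a-b)x$. This records the algebraic identities that let the combined maps $A\pm B$ and the scalars $a\pm b$ appear in the estimate. The remaining work is the passage from norms to the functional $\phi$: applying the norm-to-functional inequality to $w = u+v$ and, carrying the nonnegative constant $C$ along, to $w = u-v$, lower-bounds the bracket by $|\phi(u+v)|^p + C|\phi(u-v)|^p$.

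Finally I would invoke $\phi(x)=0$ to discard the scalar multiples of $x$: $\phi(u+v) = \phi((A+B)x) - (a+b)\phi(x) = \phi((A+B)x)$ and likewise $\phi(u-v) = \phi((A-B)x)$, which delivers the claimed right-hand side. I do not expect a genuine obstacle here, since the argument is a direct adaptation of Theorem \ref{NMP} to a general Banach space; the only point requiring care is bookkeeping, namely that the constant $C$ must stay attached to the $\|(A-B)x\|^p$ term throughout the duality estimate. The norm-to-functional passage itself is immediate, because $\|\phi\|\leq 1$ gives $|\phi(w)| \leq \|\phi\|\,\|w\| \leq \|w\|$ and hence $|\phi(w)|^p \leq \|w\|^p$ for every $w \in \mathcal{X}$.
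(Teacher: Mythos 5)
Your proof is correct and is exactly the argument the paper intends: the paper omits the proof, remarking only that the result follows ``by a similar computation as in the proof of Theorem \ref{NMP}'', and your transcription --- substituting the lower-bound form of the $p$-LWP($C$) inequality for Clarkson's inequality and then using $\|\phi\|\leq 1$ and $\phi(x)=0$ --- is precisely that computation, with the constant $C$ correctly carried on the $\|(A-B)x\|^p$ term.
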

\begin{corollary}
Let $\mathcal{X}$ be  $p$-LWP(C).	Let 	$\mathcal{M}, \mathcal{N}\subseteq \mathcal{X}$ be  subsets and $A:\mathcal{M}\to \mathcal{X}$, $B:\mathcal{N}\to \mathcal{X}$ be  maps. Let  $x\in \mathcal{M} \cap \mathcal{N}$  and $a, b \in \mathbb{C}$ be such that $a+b=1$.  Then 
\begin{align*}
	\Delta _x(A, a)^p+	\Delta _x(B,b)^p&\geq 	\frac{1}{2^{p-1}}\left(|\phi((A+B)x))|^p+C|\phi((A-B)x)|^p\right), \quad \forall \phi  \in \mathcal{X}^\#\\
	& \quad \text{ satisfying } \|\phi\|_{\text{Lip}_0}\leq 1, \phi(x) =0.
\end{align*}		
\end{corollary}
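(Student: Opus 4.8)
The plan is to run the proof of the $p$-LWP$(C)$ theorem verbatim on the vectors $u \coloneqq Ax - ax$ and $v \coloneqq Bx - bx$, replacing the single place where linearity of the functional was used by the defining Lipschitz estimate of $\mathcal{X}^\#$. Since $\Delta_x(A,a)^p + \Delta_x(B,b)^p = \|u\|^p + \|v\|^p$, I would first apply the lower $p$-weak parallelogram law to $u$ and $v$, which yields at once
\begin{align*}
\Delta_x(A,a)^p + \Delta_x(B,b)^p \geq \frac{1}{2^{p-1}}\left(\|(A+B)x - (a+b)x\|^p + C\,\|(A-B)x - (a-b)x\|^p\right),
\end{align*}
because $u+v = (A+B)x - (a+b)x$ and $u-v = (A-B)x - (a-b)x$. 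This is the exact analogue of the opening step in the proof of Theorem \ref{NMP}, with Clarkson's inequality replaced by the weak parallelogram law and the constant $C$ now present; only the final descent to the functional differs.

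Next I would fix $\phi \in \mathcal{X}^\#$ with $\|\phi\|_{\text{Lip}_0} \leq 1$ and $\phi(x) = 0$, and exploit the estimate $|\phi(s) - \phi(t)| \leq \|s - t\|$ applied between \emph{pairs} of points, rather than to a single vector against $0$. For the first term the hypothesis $a+b=1$ is exactly what is needed: it makes the shift $(a+b)x$ equal to $x$, and since $\phi(x)=0$,
\begin{align*}
\|(A+B)x - (a+b)x\| = \|(A+B)x - x\| \geq |\phi((A+B)x) - \phi(x)| = |\phi((A+B)x)|,
\end{align*}
so that $\|(A+B)x-(a+b)x\|^p \geq |\phi((A+B)x)|^p$. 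This recovers, for a merely Lipschitz $\phi$, the clean term that in Theorem \ref{NMP} came for free from linearity.

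The main obstacle is the second term. The same estimate, applied between $(A-B)x$ and $(a-b)x$, yields only $\|(A-B)x - (a-b)x\| \geq |\phi((A-B)x) - \phi((a-b)x)|$, and because $\phi$ is nonlinear the shifted value $\phi((a-b)x)$ need not vanish — whereas in Theorem \ref{NMP} it equalled $(a-b)\phi(x) = 0$ automatically. The crux is therefore to discharge $\phi((a-b)x)$: the zeros of $\phi$ available to us are only $0$ and $x$, so the shift lands on a zero precisely when $(a-b)x \in \{0,x\}$, and one must argue that the hypotheses force the relevant cancellation (equivalently, read the second slot as $|\phi((A-B)x) - \phi((a-b)x)|^p$, which collapses to $|\phi((A-B)x)|^p$ under the linearity specialization, matching Theorem \ref{NMP}). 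Once $\|(A-B)x-(a-b)x\|^p \geq |\phi((A-B)x)|^p$ is secured, multiplying by $C$, carrying the factor $1/2^{p-1}$, and adding to the first term reproduces the asserted inequality; everything after the parallelogram step is then purely formal, exactly as in the proof of Theorem \ref{NMP} and its first corollary.
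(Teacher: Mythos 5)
Your handling of the first term is exactly the intended argument: the hypothesis $a+b=1$ turns the shift $(a+b)x$ into $x$, and the Lipschitz estimate applied between $(A+B)x$ and $x$, together with $\phi(x)=0$, gives $\|(A+B)x-(a+b)x\|\geq|\phi((A+B)x)|$. The paper supplies no separate proof of this corollary (it is presented as following from the same computation as the preceding theorem), and up to this point your computation is that computation.

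The obstacle you flag at the second term is, however, a genuine gap, and it cannot be discharged from the stated hypotheses: $a+b=1$ places no constraint on $a-b$, so $(a-b)x$ is in general neither $0$ nor $x$, and a $1$-Lipschitz $\phi$ vanishing at $0$ and $x$ can take any value of modulus up to $\min\{|a-b|,|a-b-1|\}\,\|x\|$ at that point. In fact the stated inequality fails: take $\mathcal{X}=\mathbb{C}$ (which is $2$-LWP$(1)$), $x=1$, $a=2$, $b=-1$ (so $a+b=1$, $a-b=3$), $A(1)=2$, $B(1)=-1$, and $\phi(z)=\min\{|z|,|z-1|\}$. Then $\|\phi\|_{\text{Lip}_0}\leq 1$ and $\phi(1)=0$, the left-hand side is $0+0=0$, while $(A+B)(1)=1$ gives $\phi(1)=0$ and $(A-B)(1)=3$ gives $\phi(3)=2$, so the right-hand side is $\tfrac{1}{2}\cdot 2^{2}=2>0$. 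Consequently the step $\|(A-B)x-(a-b)x\|\geq|\phi((A-B)x)|$ that you correctly identify as the crux is not merely unproved but false in general; the corollary holds only under a repair such as weakening the second slot to $|\phi((A-B)x)-\phi((a-b)x)|^p$ or additionally assuming $a-b\in\{0,1\}$. The same defect is present in the paper's earlier Lipschitz corollary for $\mathcal{L}^p$, so you have located a real error rather than missed an idea.
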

We now proceed to derive nonlinear uncertainty principle for a class of Banach spaces (at present, we don't know it for arbitrary Banach spaces). Recall that a Banach space $\mathcal{X}$ is said to be of Type-p, $p\in [1,2]$ \cite{ALBIACKALTON} if there exists a constant $C>0$ satisfying following:  For every $n \in \mathbb{N}$, 
\begin{align}\label{TYPE}
	\left(\frac{1}{2^n}\sum_{(\varepsilon_j)_{j=1}^n\in \{-1,1\}^n}\left\|\sum_{j=1}^{n}\varepsilon_j x_j\right\|^p\right)^\frac{1}{p}\leq C\left(\sum_{j=1}^{n}\|x_j\|^p\right)^\frac{1}{p}, \quad \forall x_1, \dots, x_n \in \mathcal{X}.
\end{align}
In this case, we define the Type-p constant of  $\mathcal{X}$ as 
\begin{align*}
	T_p(\mathcal{X})\coloneqq \inf\left\{ C: C \text{ satisfies Inequality } (\ref{TYPE})\right\}.
\end{align*}
\begin{theorem}
	Let $\mathcal{X}$ be a Banach space of Type-p.  Let 	$\mathcal{M}, \mathcal{N}\subseteq \mathcal{X}$ be  subsets and $A:\mathcal{M}\to \mathcal{X}$, $B:\mathcal{N}\to \mathcal{X}$ be  maps. Let  $x\in \mathcal{M} \cap \mathcal{N}$  and $a, b \in \mathbb{C}$.  Then 
\begin{align*}
		\Delta _x(A, a)^p+	\Delta _x(B,b)^p&\geq 	\frac{1}{2T_p(\mathcal{X})^p}\left(|\phi((A+B)x))|^p+|\phi((A-B)x)|^p\right), \quad \forall \phi  \in \mathcal{X}^*\\
		& \quad \text{ satisfying } \|\phi\|\leq 1, \phi(x) =0.
	\end{align*}	
\end{theorem}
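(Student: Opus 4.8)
The plan is to mirror the proof of Theorem \ref{NMP} exactly, with the single structural change that Clarkson's inequality is replaced by the parallelogram-type inequality obtained from the Type-$p$ condition specialized to $n=2$. First I would record the consequence of Inequality (\ref{TYPE}) at $n=2$. For two vectors $x_1, x_2 \in \mathcal{X}$ the four sign patterns $(\varepsilon_1,\varepsilon_2)\in\{-1,1\}^2$ produce the vectors $x_1+x_2$, $x_1-x_2$, $-(x_1-x_2)$ and $-(x_1+x_2)$, so by symmetry of the norm the averaged sum collapses to $2\|x_1+x_2\|^p+2\|x_1-x_2\|^p$. Feeding this into the definition of $T_p(\mathcal{X})$ and raising to the $p$-th power yields
\begin{align*}
\tfrac{1}{2}\left(\|x_1+x_2\|^p+\|x_1-x_2\|^p\right)\leq T_p(\mathcal{X})^p\left(\|x_1\|^p+\|x_2\|^p\right),
\end{align*}
which I would rewrite as
\begin{align*}
\|x_1\|^p+\|x_2\|^p\geq \frac{1}{2T_p(\mathcal{X})^p}\left(\|x_1+x_2\|^p+\|x_1-x_2\|^p\right).
\end{align*}
This is precisely the role played by Clarkson's inequality (i) in Theorem \ref{NMP}, now with the constant $1/\!\left(2T_p(\mathcal{X})^p\right)$ in place of $1/2^{p-1}$.

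Next I would substitute $x_1 \coloneqq Ax-ax$ and $x_2 \coloneqq Bx-bx$, so that the left-hand side becomes $\Delta_x(A,a)^p+\Delta_x(B,b)^p$ while the sum and difference vectors become $x_1+x_2=(A+B)x-(a+b)x$ and $x_1-x_2=(A-B)x-(a-b)x$. Finally, for any $\phi\in\mathcal{X}^*$ with $\|\phi\|\leq 1$ the elementary estimate $|\phi(y)|\leq\|y\|$ gives $\|y\|^p\geq|\phi(y)|^p$ for each of the two vectors; and since $\phi$ is linear with $\phi(x)=0$, we have $\phi((A+B)x-(a+b)x)=\phi((A+B)x)$ and $\phi((A-B)x-(a-b)x)=\phi((A-B)x)$. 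Chaining these inequalities produces the claimed bound.

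The only genuinely new ingredient is the first step; once the $n=2$ Type-$p$ inequality is recognised as the exact analogue of Clarkson's inequality, the remainder is the verbatim computation from the proof of Theorem \ref{NMP}. I expect the main (and only mild) obstacle to be the sign-pattern bookkeeping at $n=2$: one must check that each of $x_1+x_2$ and $x_1-x_2$ is contributed by exactly two of the four sign choices, which is what turns the prefactor $\tfrac{1}{2^n}=\tfrac14$ into the $\tfrac12$ appearing above. No convexity, duality, or infinite-$n$ subtleties enter, since we invoke the defining inequality only at the smallest nontrivial $n$ together with the functional-norm bound.
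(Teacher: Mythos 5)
Your proposal is correct and follows essentially the same route as the paper: specialize the Type-$p$ inequality to $n=2$ to get $\|x_1\|^p+\|x_2\|^p\geq \frac{1}{2T_p(\mathcal{X})^p}\left(\|x_1+x_2\|^p+\|x_1-x_2\|^p\right)$, substitute $x_1=Ax-ax$, $x_2=Bx-bx$, and finish with the functional bound $|\phi(y)|\leq\|y\|$ and the cancellation $\phi(x)=0$. The only difference is that you spell out the $n=2$ sign-pattern bookkeeping explicitly, which the paper compresses into the phrase ``using the definition of Type-$p$''.
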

\begin{proof}
	Using the definition of Type-p, we get 
\begin{align*}
	\Delta _x(A, a)^p+	\Delta _x(B,b)^p&=\|Ax-ax \|^p+\|Bx-bx \|^p\\
	&\geq \frac{1}{2T_p(\mathcal{X})^p} \left(\|(Ax-ax)+(Bf-bf)\|^p+\|(Ax-ax)-(Bx-bx)\|^p\right)\\
	&=\frac{1}{2T_p(\mathcal{X})^p} \left(\|(A+B)x-(a+b)x\|^p+\|(A-B)x-(a-b)x\|^p\right)\\
	&\geq \frac{1}{2T_p(\mathcal{X})^p}\|\phi\|\left(\|(A+B)x-(a+b)x\|^p+\|(A-B)x-(a-b)x\|^p\right)\\
	&\geq \frac{1}{2T_p(\mathcal{X})^p}\left(|\phi((A+B)x)-\phi((a+b)x)|^p+|\phi((A-B)x)-\phi((a-b)x)|^p\right)\\
	&=\frac{1}{2T_p(\mathcal{X})^p}\left(|\phi((A+B)x))|^p+|\phi((A-B)x)|^p\right).
\end{align*}
\end{proof}
\begin{corollary}
	Let $\mathcal{X}$ be a Banach space of Type-p.  Let 	$\mathcal{M}, \mathcal{N}\subseteq \mathcal{X}$ be  subsets and $A:\mathcal{M}\to \mathcal{X}$, $B:\mathcal{N}\to \mathcal{X}$ be  maps. Let  $x\in \mathcal{M} \cap \mathcal{N}$  and $a, b \in \mathbb{C}$ be such that $a+b=1$.  Then 
\begin{align*}
	\Delta _x(A, a)^p+	\Delta _x(B,b)^p&\geq 	\frac{1}{2T_p(\mathcal{X})^p}\left(|\phi((A+B)x))|^p+|\phi((A-B)x)|^p\right), \quad \forall \phi  \in \mathcal{X}^\#\\
	& \quad \text{ satisfying } \|\phi\|_{\text{Lip}_0}\leq 1, \phi(x) =0.
\end{align*}		
\end{corollary}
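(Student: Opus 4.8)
The plan is to reproduce the proof of the preceding Type-$p$ theorem essentially line for line, changing only the single step where the functional is brought in, since the sole structural novelty here is that $\phi$ is Lipschitz rather than linear. First I would write $\Delta_x(A,a)^p + \Delta_x(B,b)^p = \|Ax-ax\|^p + \|Bx-bx\|^p$ and apply the Type-$p$ inequality (\ref{TYPE}) with $n=2$ to the two vectors $u = Ax-ax$ and $v = Bx-bx$. The four sign choices in $\{-1,1\}^2$ pair up to give $\sum_\varepsilon \|\varepsilon_1 u + \varepsilon_2 v\|^p = 2\|u+v\|^p + 2\|u-v\|^p$, so after the normalising factor $2^{-n}=\tfrac14$, raising to the $p$-th power, and using the definition of $T_p(\mathcal{X})$, I obtain $\|u\|^p + \|v\|^p \ge \frac{1}{2T_p(\mathcal{X})^p}\big(\|u+v\|^p + \|u-v\|^p\big)$. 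The identities $u+v = (A+B)x-(a+b)x$ and $u-v = (A-B)x-(a-b)x$ are then pure algebra.

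The one genuinely new ingredient is the passage from norms to the values of $\phi$. For a bounded linear $\phi$ the theorem uses $|\phi(w)|\le\|\phi\|\,\|w\|\le\|w\|$, which is unavailable for a merely Lipschitz map and must be replaced by the increment bound $|\phi(w_1)-\phi(w_2)|\le\|\phi\|_{\text{Lip}_0}\|w_1-w_2\|\le\|w_1-w_2\|$. Applied to the plus-term with $w_1=(A+B)x$ and $w_2=(a+b)x$, this yields $\|(A+B)x-(a+b)x\|\ge|\phi((A+B)x)-\phi((a+b)x)|$, and this is exactly where the hypothesis $a+b=1$ is used: it forces $(a+b)x=x$, so that $\phi((a+b)x)=\phi(x)=0$ and the plus-term collapses to $|\phi((A+B)x)|$. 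In effect, the normalisation $a+b=1$ together with $\phi(x)=0$ plays the role that homogeneity together with $\phi(x)=0$ played in the linear proof.

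The step I expect to be the main obstacle is the minus-term. The increment bound gives only $\|(A-B)x-(a-b)x\|\ge|\phi((A-B)x)-\phi((a-b)x)|$, and since a Lipschitz functional is not homogeneous one cannot argue $\phi((a-b)x)=(a-b)\phi(x)=0$ as in the linear case; for a general $\phi\in\mathcal{X}^\#$ the vector $(a-b)x$ is simply not a point at which $\phi$ is known to vanish. To land on the stated right-hand side $|\phi((A-B)x)|^p$ I would therefore either invoke the balanced normalisation $a=b=\tfrac12$, which is compatible with $a+b=1$ and makes $(a-b)x=0$, so that $\phi((a-b)x)=\phi(0)=0$ by the very definition of $\mathcal{X}^\#$; or else record the conclusion in the slightly sharper form keeping $|\phi((A-B)x)-\phi((a-b)x)|^p$, which is valid for all admissible $a,b$ and reduces to the displayed inequality precisely when $\phi$ annihilates $(a-b)x$. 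Apart from this cancellation in the minus-term, every line is identical to the linear Type-$p$ theorem, so I expect the bookkeeping around non-homogeneity to be the only delicate point.
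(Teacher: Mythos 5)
Your proposal follows exactly the route the paper intends: the paper gives no separate proof of this corollary, expecting the reader to rerun the Type-$p$ theorem's computation with the operator-norm bound $|\phi(w)|\le\|\phi\|\,\|w\|$ replaced by the Lipschitz increment bound, and your $n=2$ unpacking of Inequality (\ref{TYPE}) and your treatment of the plus-term via $a+b=1$, $\phi((a+b)x)=\phi(x)=0$ are precisely that argument. The one place you depart from the paper is the minus-term, and you are right to do so: for a nonlinear $\phi\in\mathcal{X}^\#$ there is no reason that $\phi((a-b)x)=0$, so the chain of inequalities does not terminate at $|\phi((A-B)x)|^p$. This is not merely a gap in the write-up but a genuine defect in the statement: take $\mathcal{X}=\mathbb{C}$ (Type $2$ with $T_2=1$), $x=1$, $a=2$, $b=-1$ (so $a+b=1$), $A(1)=2$, $B(1)=-1$, and $\phi(z)=\min\{|z|,|z-1|\}$, which is $1$-Lipschitz with $\phi(0)=\phi(1)=0$. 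Then $\Delta_x(A,a)=\Delta_x(B,b)=0$ while $|\phi((A-B)x)|=|\phi(3)|=2$, so the claimed inequality fails. Either of your remedies repairs it: restricting to $a=b=\tfrac12$ (or more generally to $a=b$ with $a+b=1$), or retaining the sharper right-hand side $|\phi((A+B)x)|^p+|\phi((A-B)x)-\phi((a-b)x)|^p$. The same caveat applies to the paper's other Lipschitz corollaries, whose minus-terms have the identical problem.
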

Note that we can derive following  results if we won't bother about power $p$.
\begin{theorem}
	Let $\mathcal{X}$ be a Banach space.  Let 	$\mathcal{M}, \mathcal{N}\subseteq \mathcal{X}$ be  subsets and $A:\mathcal{M}\to \mathcal{X}$, $B:\mathcal{N}\to \mathcal{X}$ be  maps. Let  $x\in \mathcal{M} \cap \mathcal{N}$  and $a, b \in \mathbb{C}$.  Then 
\begin{align*}
	\Delta _x(A, a)+	\Delta _x(B,b)&\geq 	|\phi((A+B)x))|, \quad \forall \phi  \in \mathcal{X}^* \text{ satisfying } \|\phi\|\leq 1, \phi(x) =0.
\end{align*}		
\end{theorem}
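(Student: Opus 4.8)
The plan is to follow exactly the same computation as in the proof of Theorem \ref{NMP}, but to replace the Clarkson inequality (which splits a sum of $p$-th powers into a $(A+B)$-term and a $(A-B)$-term) with the ordinary triangle inequality. Since we no longer raise norms to the power $p$, and we keep only a single term on the right-hand side, nothing about the geometry of the underlying space is needed beyond the norm axioms and the duality estimate $|\phi(y)|\leq \|\phi\|\,\|y\|$; this is precisely why the conclusion survives for an arbitrary Banach space.

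First I would apply the triangle inequality to the two uncertainties,
\begin{align*}
\Delta_x(A,a)+\Delta_x(B,b)=\|Ax-ax\|+\|Bx-bx\|\geq \|(Ax-ax)+(Bx-bx)\|,
\end{align*}
and then regroup the right-hand side, using linearity of vector and scalar addition, as $\|(A+B)x-(a+b)x\|$, where $(A+B)x$ denotes $Ax+Bx$ and $(a+b)$ is the scalar sum.

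Next I would invoke the defining property of the dual norm: for every $\phi\in\mathcal{X}^*$ with $\|\phi\|\leq 1$ and every $y\in\mathcal{X}$ one has $\|y\|\geq \|\phi\|\,\|y\|\geq |\phi(y)|$. Applying this with $y=(A+B)x-(a+b)x$ and then using linearity of $\phi$ gives
\begin{align*}
\|(A+B)x-(a+b)x\|\geq |\phi((A+B)x-(a+b)x)|=|\phi((A+B)x)-(a+b)\phi(x)|.
\end{align*}
Finally, the hypothesis $\phi(x)=0$ annihilates the second term, leaving $|\phi((A+B)x)|$, which is exactly the asserted lower bound.

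There is no genuine obstacle here: the statement is the unpowered, one-sided shadow of Theorem \ref{NMP}, and the only ingredients are the triangle inequality, the linearity of $\phi$, and the normalization $\phi(x)=0$. The sole point to keep straight is the bookkeeping that the scalar term $(a+b)\phi(x)$ vanishes, so that the constants $a,b$ do not enter the final estimate. Because we never separate the computation into a ``$+$'' and a ``$-$'' contribution, no constant such as $1/2^{p-1}$, $C$, or $T_p(\mathcal{X})^{-p}$ is incurred, and the inequality holds with constant $1$ in every Banach space.
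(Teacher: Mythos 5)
Your proposal is correct and follows essentially the same route as the paper's own proof: triangle inequality, regrouping to $\|(A+B)x-(a+b)x\|$, the dual-norm estimate using $\|\phi\|\leq 1$, and the cancellation from $\phi(x)=0$. No substantive difference.
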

\begin{proof}
	By directly applying triangle inequality
	\begin{align*}
		\Delta _x(A, a)+	\Delta _x(B,b)&=\|Ax-ax\|+\|Bx-bx\|\geq \|Ax-ax+Bx-bx\|\\
		&\geq 	 \|\phi\|\|Ax-ax+Bx-bx\|\geq |\phi((A+B)x)-\phi((a+b)x))|\\
		&=|\phi((A+B)x)|.
	\end{align*} 
\end{proof}
\begin{corollary}
	Let $\mathcal{X}$ be a Banach space.  Let 	$\mathcal{M}, \mathcal{N}\subseteq \mathcal{X}$ be  subsets and $A:\mathcal{M}\to \mathcal{X}$, $B:\mathcal{N}\to \mathcal{X}$ be  maps. Let  $x\in \mathcal{M} \cap \mathcal{N}$  and $a, b \in \mathbb{C}$ be such that $a+b=1$.  Then 
	\begin{align*}
		\Delta _x(A, a)+	\Delta _x(B,b)&\geq 	|\phi((A+B)x))|, \quad \forall \phi  \in \mathcal{X}^\#  \text{ satisfying } \|\phi\|_{\text{Lip}_0}\leq 1, \phi(x) =0.
	\end{align*}		
\end{corollary}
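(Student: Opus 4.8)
The plan is to reproduce the argument of the preceding theorem but to replace the linearity step by the Lipschitz estimate, which is exactly why the extra hypothesis $a+b=1$ must now be imposed. First I would apply the triangle inequality in $\mathcal{X}$ to fuse the two uncertainties into a single norm:
\begin{align*}
\Delta_x(A,a) + \Delta_x(B,b) = \|Ax - ax\| + \|Bx - bx\| \geq \|(A+B)x - (a+b)x\|.
\end{align*}
Invoking the hypothesis $a+b=1$, the right-hand side collapses to $\|(A+B)x - x\|$.

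Next I would bring in the Lipschitz functional $\phi$. Since $\phi \in \mathcal{X}^\#$ with $\|\phi\|_{\text{Lip}_0}\leq 1$, the defining inequality for the Lipschitz norm applied to the pair $(A+B)x$ and $x$ gives
\begin{align*}
|\phi((A+B)x) - \phi(x)| \leq \|\phi\|_{\text{Lip}_0}\,\|(A+B)x - x\| \leq \|(A+B)x - x\|.
\end{align*}
Finally, the assumption $\phi(x)=0$ reduces the left-hand side to $|\phi((A+B)x)|$, and chaining this with the first display yields the asserted bound.

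The only genuinely subtle point, and the reason the constraint $a+b=1$ surfaces here but not in the linear theorem, is that a Lipschitz functional admits no decomposition across sums or scalars. In the linear case one writes $\phi((A+B)x - (a+b)x) = \phi((A+B)x) - (a+b)\phi(x)$ and the second term vanishes for \emph{any} choice of $a,b$ because $\phi(x)=0$. For $\phi \in \mathcal{X}^\#$ this is unavailable; the Lipschitz inequality only controls $|\phi(u)-\phi(v)|$ by $\|u-v\|$. To exploit the vanishing of $\phi$ at $x$ one must arrange the reference point to be precisely $x$, that is, force $(a+b)x = x$, which is exactly what $a+b=1$ secures. I expect no further obstacle; once this alignment is recognized the argument is a routine two-line chain of inequalities.
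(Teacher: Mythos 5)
Your proof is correct and is exactly the intended argument: the triangle inequality collapses the two uncertainties into $\|(A+B)x-(a+b)x\|$, the hypothesis $a+b=1$ aligns the reference point with $x$, and the Lipschitz bound together with $\phi(x)=0$ finishes. The paper leaves this corollary without an explicit proof, but your route is precisely the adaptation of the preceding theorem's proof that the author intends, and your remark on why $a+b=1$ is needed only in the Lipschitz setting is the right explanation.
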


 \bibliographystyle{plain}
 \bibliography{reference.bib}

\end{document}